\documentclass[11pt,oneside]{article}
\usepackage{amsthm, amssymb, amsmath,psfrag}
\usepackage{pstool}
\usepackage{graphicx}
\usepackage{epsfig}
\usepackage{hyperref}

\newcommand{\norm}[1]{\left\Vert#1\right\Vert}
\newcommand{\abs}[1]{\left\vert#1\right\vert}

\theoremstyle{plain}
\newtheorem{thm}{Theorem}[section]
\newtheorem{cor}[thm]{Corollary}

\newtheorem{prop}[thm]{Proposition}

\theoremstyle{definition}
\newtheorem{defn}[thm]{Definition}

\theoremstyle{remark}

\newtheorem{rem}{Remark}

\numberwithin{equation}{section}

\newcommand{\dist}{\operatorname{dist}}

\title{On non-positive curvature properties of the  Hilbert metric}

\author{Layth M. Alabdulsada and L\'{a}szl\'{o} Kozma}

\begin{document}
\maketitle
\begin{abstract}
In this paper, we consider different types of non-positive curvature properties of the Hilbert metric of a convex domain in ${\Bbb R^n}$.
First, we survey the relationships among the concepts and prove that in the case of Hilbert metric some of them are equivalent.
Furthermore, we show some condition which implies the rigidity feature: if the Hilbert metric is Berwald, i.e., its Finslerian Chern connection reduces to a linear one, then the domain is an ellipsoid and the metric is Riemannian.
\end{abstract}

{\small
\noindent
\medskip\noindent{\sl Subjclass:\/} 53C60.\\
\medskip\noindent{\sl Keywords:\/} non-positive curvature, geodesic space, Berwald space, Hilbert metric.
}

\section{Introduction}

The Hilbert geometry of a convex domain is just the generalization the Cayley-Klein model of the hyperbolic plane, first introduced by D. Hilbert in 1908.
The Hilbert metric is not only an example of Finsler metric with special metric but also a geodesic metric space, where several synthetic concepts of hyperbolicity, i.e., of non-positive curvature can be used.
 Our aim is to find what consequences are implied by the non-positive curvature properties of the Hilbert metric, and whether the mutual relationships of the different non-positivity concepts reduce to each other in this case.

In geodesic metric spaces, several types of non-positivity concepts were introduced by Alexandrov, see \cite{Jost},
Busemann \cite{Busemann}, and others. In the general context, the Alexandrov's one is the strongest, implying Busemann's one.
Then a weaker class of non-positively curved spaces is the spaces of peakless metrics,
called Pedersen type metrics by some authors, and the weakest one is the class of geodesic metric spaces with convex capsules. (See the definitions in Section 2).
Examples show that the inclusion among these classes is proper.

Turning to the Hilbert geometry, it was proved (\cite{Busemann}) that
for any strictly convex domain, the Hilbert metric automatically satisfies the 2 last mentioned assumptions of non-positivity,
namely, the Pedersen property and having convex capsules.
Alexandrov's and Busemann's assumptions are more rigid \cite{K-S}: if the Hilbert metric of a convex domain satisfies the Busemann (or Alexandrov)
nonpositivity property, then the domain is an ellipsoid, and the metric is a Riemannian one,
and equivalent to the Cayley-Klein model of hyperbolic plane.

During the more than one-hundred years elapsed, a plenty of geometrical properties of the Hilbert metric have been investigated from several aspects.
The reader may find a comprehensive survey of its history in \cite{Papa} by  Papadopoulos.

It has been turned out that the Hilbert metric is also a special case of Finsler geometry, which was introduced later (by P. Finsler in 1917),
and deeply analyzed by L. Berwald in the twenties-thirties of the last century. Nevertheless, about 60 years later T. Okada (\cite{Okada}) proved in a transparent manner that the Hilbert metric is projectively flat, and its flag curvature is negative constant.

In the sequel, we will present 4 concepts which are linked in our investigations. First, in Section 2 we give the basic definitions of 4 types of non-positive curvature in geodesic metric spaces, some of their properties and their general relationships.
After defining the Hilbert metric of a convex domain in Section 3, and listing some results important for the next steps, we show that in the case of Hilbert metric, the first two concepts of non-positivity are equivalent if and only if the domain is an ellipsoid.
Stepping to the analytical considerations in Section 4 we give some basics of Finsler geometry, and especially, our special case called Berwald space.
Then we present those relationships and known results about these concepts which are used to prove our result. Finally, we show that if the Finsler metric induced by the Hilbert metric of a convex domain is Berwald, then the domain should be an ellipsoid.

\section{Non-positive curvature  concepts in metric spaces}

Let $(M,d)$ a metric space, and $\gamma :[0,1] \rightarrow M$ a curve in $M$. The length of $\gamma$ is
$$ \ell(\gamma):= \mbox{sup} \left \{ \sum_{i=1}^{n } d(\gamma (t_{i-1}),\gamma(t_i) ): 0 = t_0 \textless t_1 \textless ... \textless t_n =1, n \in \mathbb N \right \}.$$
A curve $ \gamma :[0,1] \rightarrow M $
is called a \emph{geodesic} if there exists $ \epsilon  > 0 $ such that $$\ell(\gamma_{\restriction{[t_1,t_2 ]}}) = d(\gamma (t_1),\gamma(t_2) )  \mbox {\     whatever}   \abs{t_1-t_2}  \textless \ \epsilon, t_1,t_2 \in [0,1]. $$
This property is independent of the choice of parametrization, although the value of $\epsilon$ may change.
A geodesic $\gamma :[0,1] \rightarrow M$  is called a \emph{shortest geodesic}  if$$ \ell(\gamma) = d ( \gamma (0), \gamma (1)).$$

\begin{defn}  {\cite{Jost}}(Geodesic Length Space)%
{\ A metric space $(M,d)$ is called a \emph{geodesic length space}, or simply a \emph{geodesic space}, if for any two points $x,y \in M $ there exists a shortest geodesic  joining $x$ and $y$.  }
It is called \emph{locally geodesic space} if this property holds in an appropriate neighborhood of any point.\end{defn}

{For $x,y \in M$, we call $ m(x,y) $ a\emph{ midpoint} of $x$ and $y$ if $$ m(x,y)= \gamma (\frac{1}{2})$$ for a shortest geodesic $\gamma :[0,1] \rightarrow M$ from $x$ to $y$, where $\gamma$ is supposed to be parametrized proportionally to arc-length.}

\begin{defn} {\cite{Jost}}
(Alexandrov Non-positive Curvature){ \ A locally geodesic  space $(M,d)$ is said to be an \emph{Alexandrov non-positive curvature  space }if for every ${p\in M}$ there exists $\delta_p > 0$ such that for every $ x,y,z \in B(p,\delta_p)$ and any shortest geodesic $  \gamma:[0,1] \rightarrow M $ with $ \gamma (0)=x,  \gamma(1)=z$, we have for $ 0\leq t \leq 1 $
\begin{equation*}d^2(y,  \gamma(t))\leq (1-t) \ d^2(y, x)+t \ d^2(y, z)-t(1-t) \ d^2(x,z).\end{equation*}
(Alexandrov non-positive curvature inequality).}
\end{defn}

\begin{rem}
In some literature, the Alexandrov non-positive curvature  spaces are called CAT(0)--spaces as well. Furthermore, the complete CAT(0)--spaces are the Hadamard manifolds, see {\cite{B-H-J-K-S}}.
\end{rem}

\begin{defn} {\cite{Jost}}
(Busemann Non-positive Curvature) { \ A locally geodesic space $(M,d)$ is said to be a \emph{Busemann non-positive curvature space}  if for every ${p\in M}$ there exists $\delta_p > 0$ such that for all $ x,y,z \in B(p,\delta_p)$
 \begin{equation*}d(m(x,y),m(x,z))\leq \frac{1}{2}d(y,z).\end{equation*} (Busemann non-positive curvature inequality).}
\end{defn}
In other words, for any two shortest geodesic $  \gamma_1,  \gamma_2:[0,1] \rightarrow M $, with $   \gamma_1(0) = x = \gamma_2(0) ) \in B(p,\delta_p )$ and with endpoints of $\gamma_1,  \gamma_2 \in B(p,\delta_p )$, we have
\begin{equation*}d ( \gamma_1(\frac{1}{2}), \gamma_2(\frac{1}{2}))\leq \frac{1}{2}d( \gamma_1(1), \gamma_2(1)).\end{equation*}

Now, define the distance of a curve $\gamma$ and a point $q\in M$ as
$$ \dist (\gamma, q) = \inf \{ d(\gamma(t), q) \,:\, 0\le t\le 1 \}.   $$

\begin{defn} { \cite{K-K}}
(Pedersen  Non-positive Curvature) {A locally geodesic space is said to be a\emph{ Pedersen  non-positive curvature space} if for every ${p\in M}$ there exists $\delta_p > 0$ such that for any two shortest geodesic $\gamma_1,  \gamma_2: [0,1] \rightarrow B(p,\delta_p )$ the function $ f :[0,1] \rightarrow \Bbb R$, defined by
\begin{equation*} f(t)= \dist(\gamma_1,  \gamma_2(t))\end{equation*}
is quasiconvex, i.e., for every $t \in [0,1]$, $ f(t) \leq   \max \{f(0), f(1)\}$.}

 Let $\gamma: [a,b] \rightarrow M$  be a shortest geodesic and $\alpha > 0$. Attached to $\gamma$ and $\alpha$, we define the \emph{capsule} as   \begin{equation*}\mathcal {C_{\gamma}}(\alpha)= \{q \in M: \dist(\gamma, q) \leq \alpha \}.\end{equation*}
Let $M_0$ be a non-empty subset of $M$. The pair $(\gamma, \alpha)$ is said to be $M_0$-admissible if $\mathcal {C_{\gamma}}(\alpha) \subset M_0$.
\end{defn}

\begin{defn}{ \cite{K-K}}
(Convex Capsules) {We say that a locally geodesic space $(M, d)$ has \emph{convex capsules} if for every ${p\in M}$ there exists $\delta_p > 0$ such that for every $B(p,\delta_p )$-admissible pair $(\gamma, \alpha)$, the capsule $\mathcal {C_{\gamma}}(\alpha)$ is convex.}
\end{defn}

\begin{rem}{In general, an Alexandrov non-positive curvature space is a Busemann non-positive curvature space (see \cite{Jost}, Chapter 2). On the other side, some initiative examples show that not all Busemann non-positive curvature spaces are Alexandrov non-positive curvature spaces \cite{Jost}}.
\end{rem}
\begin{defn}{\cite{Shijie}}
(Average Angle){ Let $\gamma_1: [0,a] \rightarrow M$ and $\gamma_2: [0,b] \rightarrow M$ be two shortest geodesics  with $p= \gamma_1(0)= \gamma_2(0).$
\emph{The average angle} between  $\gamma_1$ and  $\gamma_2$ at $p$ is defined by
$$\measuredangle (\gamma_1, p, \gamma_2) = \lim_{n \to \infty}A_{\gamma_1, \gamma_2}(\frac{a}{2^n}, \frac{b}{2^n}), $$
if the limit of the sequence exists, where the comparison angle is given by
$$A_{\gamma_1, \gamma_2}(a,b) := \text{arccos} \ \frac{a^2+b^2- d(\gamma_1(a), \gamma_2(b))^2}{2ab}.$$
Let $q$ be an inner point of a shortest geodesic $pr$, and  $qs$ be a shortest geodesic. It is clear that for an Alexandrov non-positive curvature space
the sum of adjacent average angles is at least $\pi$, i.e., $\measuredangle (p,q, s)+ \measuredangle (s,q, r) \geq \pi$.
}
\end{defn}

\begin{thm}{\rm \cite{Shijie}} In a locally geodesic space the Alexandrov and  Busemann non-positive curvature properties are equivalent if and only if the sum of adjacent average angles $\geq \pi$.
\end{thm}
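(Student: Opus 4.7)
The theorem is an iff. One direction -- if Alexandrov and Busemann NPC coincide in the space then the angle condition holds -- is essentially recorded in Definition~2.7: the triangle comparison in an Alexandrov space immediately gives $\measuredangle(p,q,s) + \measuredangle(s,q,r) \geq \pi$ for any interior point $q$ of a geodesic $pr$, so any space in which Busemann NPC already implies Alexandrov NPC inherits the angle property. The substantive direction, which I sketch below, is the converse: assuming the angle hypothesis, I want to upgrade Busemann NPC to Alexandrov NPC.

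\emph{Reduction to the midpoint CN inequality.} It suffices to prove the Alexandrov inequality at $t=1/2$:
\begin{equation*}
d^2(y,m) \leq \tfrac{1}{2} d^2(y,x) + \tfrac{1}{2} d^2(y,z) - \tfrac{1}{4} d^2(x,z),
\end{equation*}
for every $x,y,z \in B(p,\delta_p)$ and $m$ the midpoint of a shortest geodesic from $x$ to $z$. Given CN at every such midpoint, the full Alexandrov inequality follows by the standard dyadic iteration: Busemann NPC supplies convexity of $t \mapsto d(y,\gamma(t))$ along subsegments, which, combined with the midpoint estimate on each dyadic level, yields the quadratic comparison at every dyadic $t$, and hence for all $t \in [0,1]$ by continuity.

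\emph{Proof of the midpoint inequality.} Fix a shortest geodesic $\sigma$ from $m$ to $y$ and set $\alpha = \measuredangle(x,m,y)$, $\beta = \measuredangle(y,m,z)$. The angle hypothesis at the interior point $m$ of the geodesic $xz$ gives $\alpha + \beta \geq \pi$, hence $\cos\alpha + \cos\beta \leq 0$. The key analytic step is to use the definition of the average angle together with Busemann convexity of distances between subsegments to obtain the full-scale comparison-angle bound
\begin{equation*}
A_{\gamma,\sigma}\bigl(d(m,x), d(m,y)\bigr) \geq \alpha,
\end{equation*}
equivalently the Euclidean cosine estimate
\begin{equation*}
d(x,y)^2 \geq d(m,x)^2 + d(m,y)^2 - 2\,d(m,x)\,d(m,y)\cos\alpha,
\end{equation*}
and symmetrically the analogous inequality with $z$ and $\beta$. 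Adding the two inequalities, substituting $d(m,x) = d(m,z) = \tfrac{1}{2} d(x,z)$, and using $\cos\alpha + \cos\beta \leq 0$ to discard the cross term, the right-hand side collapses to $\tfrac{1}{2} d(x,z)^2 + 2 d(m,y)^2$, and a rearrangement yields CN.

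\textbf{Main obstacle.} The non-routine step is the full-scale comparison-angle bound $A(d(m,x), d(m,y)) \geq \alpha$. By definition $\alpha$ is the limit of $A_{\gamma,\sigma}(a/2^n, b/2^n)$ as $n\to\infty$, so what is really required is a monotonicity of the comparison angle under dyadic refinement. This is precisely where the Busemann hypothesis is indispensable: the quasiconvex control on distances between subsegments prevents the comparison angle from drifting as one passes from fine to coarse scales, so that the infinitesimal data carried by the average angle can be promoted to a macroscopic cosine inequality. Remove either the angle condition or Busemann convexity and the promotion fails.
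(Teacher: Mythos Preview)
The paper does not prove this theorem at all; it is quoted from Gu \cite{Shijie} as a black box and then applied in Corollary~3.5. So there is no ``paper's own proof'' to compare against; you have written an independent argument.

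Your outline is essentially correct and is, as far as I know, the argument Gu uses. One remark: the step you flag as the \emph{main obstacle} --- promoting the average angle to a full-scale comparison-angle bound --- is actually the easiest part once you write it down. Busemann NPC gives $d(\gamma_1(a/2),\gamma_2(b/2)) \leq \tfrac12 d(\gamma_1(a),\gamma_2(b))$ for two geodesics issuing from a common point; squaring and inserting into the definition of $A$ yields
\[
\frac{(a/2)^2+(b/2)^2 - d(\gamma_1(a/2),\gamma_2(b/2))^2}{2(a/2)(b/2)}
\;\geq\;
\frac{a^2+b^2 - d(\gamma_1(a),\gamma_2(b))^2}{2ab},
\]
so $A(a/2,b/2)\le A(a,b)$. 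Iterating, the dyadic sequence $A(a/2^n,b/2^n)$ is nonincreasing and its limit $\alpha$ therefore satisfies $\alpha \le A(a,b)$, which is exactly the monotonicity you need. You should state this explicitly rather than leave it as a heuristic about ``quasiconvex control''.

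Two smaller points. First, the reduction from midpoint CN to the full inequality does not need the auxiliary convexity of $t\mapsto d(y,\gamma(t))$: repeated application of midpoint CN on dyadic subintervals already reproduces the quadratic bound exactly at every dyadic $t$ (check $t=1/4$ by hand), and continuity finishes. Second, the theorem as stated should be read under the standing hypothesis that the space is Busemann NPC; otherwise the biconditional has a vacuous branch (if the space is not Busemann NPC, the equivalence of the two properties is trivially true but the angle condition need not hold, or even make sense). Your write-up implicitly assumes this, but it is worth saying.
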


\section{The Hilbert metric of a convex domain and its curvature}

\begin{defn} \label{H-metric} Let $K$ be a bounded convex open set in ${\Bbb R^n}$ $( n\geq 2)$. The Hilbert metric $d_K$  on $K$ is defined as follows. For any $x\in K$, let $d_K(x,x)=0$. For distinct points $x,y \in K $,  assume that the straight line passing through $x,y$ intersects the boundary $\partial  K$ at two points $a,b$ such that the order of these four points on the line is $a,x,y,b$ as in Figure 1.

 {Denote the cross-ratio of the points by $$ [a,x,y,b]= \frac {\norm{y-a}}{\norm {y-x}}  \frac {\norm {b-x}}{\norm{ b-a}}.$$}
 {where $\norm{.}$ is the Euclidean norm of ${\Bbb R^n}$. Then the \emph{Hilbert metric }is
 \begin{equation*}d_K(x,y)=\frac{1}{2} \ln [a,x,y,b], \end{equation*}
 and the metric space $(K,d_K)$ is called a\emph{ Hilbert geometry}.}
\begin{figure}[h]
\psfrag{a}{$a$}
\psfrag{b}{$b$}
\psfrag{x}{$x$}
\psfrag{y}{$y$}

\begin{center}
  \centering \includegraphics[scale=0.6]{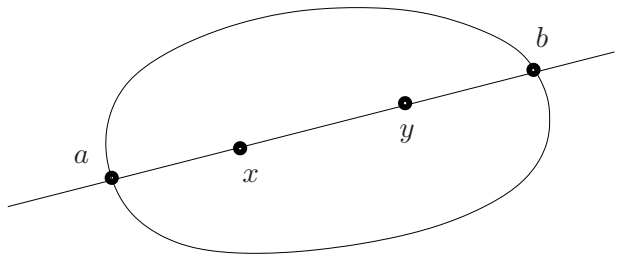}
\caption{ Hilbert geometry }
\end{center}
\end{figure}
\end{defn}

Concerning the curvature properties of the Hilbert metric, Busemann (\cite{Busemann}, page 108) showed that for any strictly convex domain $K$, the Hilbert metric $d_K$ satisfies the Pedersen non-positivity curvature property, and, consequently,  has a convex capsules.

{Kelly and Straus published two papers on the curvature of Hilbert geometry in 1958 and 1968. They used the concept of Busemann non-positive curvature, which is a pleasant geometric approach and weaker than Alexandrov's one. Nevertheless in the case of Hilbert metric of a convex domain it has a strong consequence, namely it implies the reduction of the domain to an ellipsoid. In details, it was proved in \cite{K-S}:}
\begin{prop}
If  Hilbert metric $(K,d_K)$ has  Busemann non-positive curvature, then the domain $K$ is an ellipsoid and the Hilbert metric $d_K$ is hyperbolic, i.e., Riemannian.
\end{prop}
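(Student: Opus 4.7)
The plan is to exploit the Busemann midpoint inequality applied to specific geometric configurations of chords in $K$, and to use the fact that in a Hilbert geometry the shortest geodesic between two points is the straight chord joining them. In particular, the Hilbert midpoint $m(x,y)$ is a concretely locatable point on the Euclidean segment $\overline{xy}$ determined by the cross-ratio $[a,x,y,b]$. The Busemann inequality then becomes an inequality between explicit logarithms of cross-ratios, so the whole problem translates into a projective/affine constraint on $\partial K$.

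I would carry this out in the following order. First, reduce to the planar case $n=2$: the Busemann condition passes to geodesic subspaces, and every $2$-plane section through two geodesics of $K$ is itself a Hilbert geometry on the planar convex section. Second, establish the necessary regularity of $\partial K$. If $\partial K$ contained a line segment or a corner, one could produce configurations $x,y,z$ arbitrarily close to a common point where two shortest geodesics fail to be unique or where the midpoint map is discontinuous, contradicting the Busemann inequality in every neighbourhood. Thus $\partial K$ must be strictly convex and of class $C^1$. Third, use an infinitesimal argument: fix a point $p\in K$ and two tangent directions, and let $x,y,z$ converge to $p$ along fixed rays; expand $d_K$ as a logarithm of the cross-ratio and Taylor-expand the Busemann inequality to second order. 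The leading-order inequality must hold for all configurations, and equality in the infinitesimal Busemann inequality is the infinitesimal characterisation of the Riemannian case. Finally, combine this with Okada's theorem, already cited in the introduction, that the Hilbert metric is projectively flat with constant negative flag curvature: a projectively flat Finsler metric with constant negative curvature that is additionally infinitesimally Riemannian must be the Beltrami--Cayley--Klein model, forcing $K$ to be an ellipsoid.

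An alternative, more elementary route in $n=2$ is to select three directions of chords through a point $x\in K$ and to express the three midpoints by cross-ratios; applying the Busemann inequality on each pair and interchanging the roles of $x,y,z$ produces reverse inequalities in symmetric configurations, so in the limit one obtains functional equalities between products of boundary distances. These equalities characterise conics among planar convex curves. One then promotes the planar result to $\mathbb{R}^n$ using the classical fact that a bounded convex body all of whose planar sections are ellipses is itself an ellipsoid.

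The main obstacle will be step three: turning the global metric inequality of Busemann into an infinitesimal identity that is strong enough to force the unit balls of the Finsler metric to be ellipsoids. The Busemann inequality is only a one-sided midpoint estimate, so the argument needs to exploit that $d_K$ is symmetric in its two arguments and invariant under the projective automorphisms of $K$ in order to upgrade the one-sided inequality to a two-sided one along a family of shrinking configurations. Once this infinitesimal rigidity is in hand, the conclusion follows by combining strict convexity of $\partial K$, Okada's constant-curvature result, and the uniqueness of the Cayley--Klein model among projectively flat Riemannian metrics of constant negative curvature.
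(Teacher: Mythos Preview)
The paper does not give its own proof of this proposition: it is quoted as the 1958 result of Kelly and Straus and then used as a black box in the proofs of Proposition~3.3 and Proposition~4.3. So there is no argument in the paper to compare your sketch against.

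On the merits of your sketch, the main line (infinitesimal expansion plus Okada's theorem) has a genuine gap at step three. In the tangent normed space of any Finsler metric the midpoint map is affine, so the Busemann midpoint inequality holds with \emph{equality} at first order for every Minkowski norm, Euclidean or not; contrary to what you assert, equality in the infinitesimal Busemann inequality does not characterise the Riemannian case. At second order the controlling quantity is the flag curvature, but by the very theorem of Okada you invoke this equals $-1$ for \emph{all} Hilbert geometries, Riemannian or not. Hence neither the first- nor the second-order term of your Taylor expansion can separate the ellipsoid from a general strictly convex $K$, and the appeal to ``infinitesimally Riemannian plus Okada plus Beltrami--Cayley--Klein uniqueness'' does not get off the ground as stated. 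Your alternative route, reducing to plane sections, writing the Busemann inequality as an explicit cross-ratio inequality on triples of chords, and extracting from symmetric configurations a functional identity that characterises conics, is much closer in spirit to what Kelly and Straus actually do and is the line worth developing.
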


\begin{cor} {\rm  (see \cite[Corollary 5.6]{Guo})}
A Hilbert metric $(K,d_K)$  satisfies the Alexandrov non-positive curvature condition if and only if $K$ is an ellipsoid.
\end{cor}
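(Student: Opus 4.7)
The plan is to reduce both directions to results already available in the excerpt. The corollary is an equivalence, so I would split it into the forward and backward implications and handle each separately.

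For the ``only if'' direction, I would chain two facts: first, a general result in geodesic metric spaces says that every Alexandrov non-positive curvature space is automatically a Busemann non-positive curvature space (this is precisely what is recalled in the Remark after Definition of convex capsules, and proved in \cite{Jost}, Chapter 2). So the Alexandrov condition for $(K,d_K)$ immediately yields the Busemann condition. Now the previous Proposition (from Kelly--Straus \cite{K-S}) asserts that whenever a Hilbert metric $(K,d_K)$ has the Busemann non-positive curvature property, the domain $K$ must be an ellipsoid. Composing the two implications finishes the forward direction.

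For the ``if'' direction, suppose $K$ is an ellipsoid. After an affine transformation we may assume $K$ is the open unit ball in $\mathbb{R}^n$. In that case the Hilbert metric $d_K$ coincides, up to the usual factor $\tfrac12$, with the Cayley--Klein model of $n$-dimensional hyperbolic space, and hence is a Riemannian metric of constant sectional curvature $-1$. Such a Riemannian manifold is a classical example of an Alexandrov non-positive curvature space: it is complete, simply connected and of non-positive sectional curvature, so by the Cartan--Hadamard comparison theorem it is a $\mathrm{CAT}(0)$ space, which by the Remark above is exactly the Alexandrov NPC property.

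The main step is really the ``only if'' direction, but no work is needed beyond invoking the already-cited Proposition; no delicate calculation is involved here, since the affine invariance of the Hilbert metric makes the backward direction essentially a translation of the well-known curvature of the hyperbolic model. The only thing one must be slightly careful about is checking that the local form of the Alexandrov inequality in Definition 2.2 matches the globally defined $\mathrm{CAT}(0)$ condition used for Hadamard manifolds; this is standard and follows from the equivalence between local and global $\mathrm{CAT}(0)$ in simply connected geodesic spaces.
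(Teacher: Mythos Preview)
Your argument is correct. Note, however, that in the paper this corollary is not given an independent proof: it is simply quoted from \cite[Corollary 5.6]{Guo} and then used as input for the next proposition. So there is no ``paper's own proof'' to compare against; what you have written is essentially the standard derivation one would expect, and it fits cleanly with the surrounding material (Remark 2 for Alexandrov $\Rightarrow$ Busemann, Proposition 3.2 for Busemann $\Rightarrow$ ellipsoid, and the identification of the ellipsoid case with the Klein model for the converse).

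One small comment: your remark about reconciling the local Alexandrov inequality of Definition 2.2 with the global $\mathrm{CAT}(0)$ property is well taken but could be streamlined. Since the definition in the paper is already local, you only need that a Riemannian manifold of constant curvature $-1$ satisfies the Alexandrov inequality in small balls; this is immediate from Toponogov/Alexandrov comparison and does not require invoking the full local-to-global Cartan--Hadamard statement.
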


\begin{prop}
Let $(K,d_K)$  be the Hilbert metric of a convex domain $K$. Then the Busemann non-positive curvature is equivalent to the Alexandrov non-positive curvature.
\end{prop}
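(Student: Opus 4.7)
The plan is to prove the equivalence as a direct consequence of the two results (the Kelly--Straus Proposition and Guo's Corollary) stated immediately before it, together with the fact that Alexandrov non-positive curvature always implies Busemann non-positive curvature in any locally geodesic space (recalled in the Remark following Definition of convex capsules).

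For the implication Alexandrov $\Rightarrow$ Busemann, nothing specific to the Hilbert geometry is needed: I would simply invoke the standard fact quoted from \cite{Jost} that in any locally geodesic space the CAT$(0)$ inequality forces the Busemann convexity inequality for midpoints. A short two-line recall suffices.

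For the converse Busemann $\Rightarrow$ Alexandrov, the strategy is to funnel the hypothesis through the rigidity result already established: assuming that $(K,d_K)$ satisfies the Busemann non-positive curvature condition, the Kelly--Straus Proposition forces $K$ to be an ellipsoid and $d_K$ to be (up to isometry) the Cayley--Klein hyperbolic metric on the interior of $K$. Since the hyperbolic space $\mathbb{H}^n$ is a complete, simply connected Riemannian manifold of constant sectional curvature $-1$, it is a Hadamard manifold, hence a CAT$(0)$ space, and in particular satisfies the Alexandrov non-positive curvature inequality. One can equally well close the argument by noting that, by Guo's Corollary, Alexandrov NPC for $(K,d_K)$ is equivalent to $K$ being an ellipsoid, which is precisely what the Busemann hypothesis has just yielded.

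The main (and essentially only) obstacle in this argument is not in the logical combination itself but in having the Kelly--Straus rigidity Proposition at one's disposal; once that is accepted the equivalence is immediate, and there is no computation to carry out. I would therefore write the proof as a single short paragraph, citing the Proposition to pass from Busemann NPC to the ellipsoid case, and then citing either the Corollary or the classical CAT$(0)$ character of $\mathbb{H}^n$ to conclude Alexandrov NPC, with the reverse implication recorded in one sentence as a general fact about geodesic metric spaces.
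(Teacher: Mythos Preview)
Your proposal is correct and follows essentially the same route as the paper's own proof: the general implication Alexandrov $\Rightarrow$ Busemann from \cite{Jost}, then Kelly--Straus to force $K$ to be an ellipsoid, and finally Guo's Corollary (or the CAT$(0)$ property of hyperbolic space) to recover Alexandrov non-positive curvature. The only minor addition you make is offering the direct Hadamard-manifold argument as an alternative to citing the Corollary, but this does not change the structure of the proof.
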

\begin{proof}
First, it is true in general that all Alexandrov non-positive curvature is Busemann non-positive curvature (\cite{Jost}). Conversely, if the Hilbert metric is Busemann non-positive curvature, then by a paper of Kelly and Straus proved in 1958 (\cite{K-S}) the domain is an ellipsoid, and the metric is a Riemannian. The latter property, however, implies by the above corollary that the Alexandrov non-positive curvature property is also satisfied.
\end{proof}

\begin{cor}
In Hilbert geometry, the sum of adjacent average angles is $\geq \pi.$
\end{cor}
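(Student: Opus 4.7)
The plan is to obtain this corollary as an immediate logical consequence of the two results that have just been established, with no further geometric analysis required. Specifically, the preceding proposition (equivalence of Busemann and Alexandrov non-positive curvature for every Hilbert metric) supplies exactly the hypothesis required by the theorem at the end of Section 2 (which characterises that equivalence, for any locally geodesic space, as being equivalent to the angle condition).

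Concretely, I would argue as follows. The preceding proposition shows that, for any convex domain $K$, the properties "$(K,d_K)$ is an Alexandrov NPC space" and "$(K,d_K)$ is a Busemann NPC space" have the same truth value. Since $(K,d_K)$ is a locally geodesic space, the theorem of Section 2 is applicable, and its "equivalence $\Rightarrow$ angle" direction yields
\[
\measuredangle(p,q,s)+\measuredangle(s,q,r)\geq\pi
\]
for every inner point $q$ of a shortest geodesic $pr$ and every shortest geodesic $qs$, which is the claim.

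The only subtlety worth checking in the write-up—and the one that I expect to be the main (and, in fact, essentially the only) conceptual point—is that the word "equivalent" in the preceding proposition matches the propositional equivalence assumed by the Section 2 theorem. This is indeed fine: either $K$ is an ellipsoid, in which case both NPC properties hold nontrivially (via the hyperbolic Cayley–Klein model), or $K$ is not an ellipsoid, in which case the Kelly–Straus rigidity forces Busemann NPC to fail, and the general implication Alexandrov NPC $\Rightarrow$ Busemann NPC then forces Alexandrov NPC to fail as well. In both scenarios the two properties have the same truth value, so the hypothesis of the Section 2 theorem is genuinely verified, and the proof is complete in a few lines.
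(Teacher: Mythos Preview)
Your proposal is correct and follows essentially the same route as the paper: combine Gu's theorem from Section~2 with the preceding proposition to conclude the angle inequality. The paper's proof is a two-line invocation of exactly these two ingredients; your additional case analysis (ellipsoid versus non-ellipsoid) merely spells out why the propositional equivalence genuinely holds, which the paper leaves implicit.
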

\begin{proof}
Gu proved in \cite{Shijie} that the sum of adjacent average angles is $\geq \pi$ if and only if the Alexandrov and  Busemann non-positive curvature properties are equivalent. Conversely, by our proposition, we get the corollary immediately.
\end{proof}

\section{Finsler structure of the Hilbert metric }

 {In this section, we recall briefly some
known facts about Finsler and Berwald spaces. For details, see \cite{B-C-S}.}

 {Let $M$ be an $n$-dimensional $C^{\infty}$ manifold and
$TM=\bigcup_{x \in M}T_x M $ the tangent bundle. If the continuous
function $ F:TM\to R_+ $ satisfies the conditions that it is
$C^\infty$ on $TM\setminus \{ 0 \};$ $ F(tu)=tF(u)$ for all $t\geq
0$ and  $u\in TM,$ i.e., $F$ is positively homogeneous of degree
one; and the matrix $g_{ij}(u):=(\frac 12F^2)_{y^iy^j}(u)$ is
positive definite for all $u\in TM\setminus \{ 0 \},$ then we say
that $F$ is a Finsler fundamental function and $(M,F)$ is a {\it Finsler manifold}.}

Every Finsler fundamental function naturally determines a metric $d_F$ as follows:
 {Let $\gamma : [0,r]\to M$ be a piecewise $C^\infty$ curve. Its
{\it integral length} is defined as
\begin{equation*} L(\gamma)= \int_0^r
F(\gamma(t), \dot\gamma(t))\,dt. \end{equation*}
For $x_0,x_1\in M$ denote by
$\Gamma(x_0,x_1)$ the set of all piecewise $C^\infty$ curves
$\gamma:[0,r]\to M$ such that $\gamma(0)=x_0$ and $\gamma(r)=x_1$.
Define a map $d_F: M\times M \to [0,\infty)$ by $$d_F(x_0,x_1) =
\inf_{\gamma \in \Gamma(x_0,x_1)} L(\gamma).$$ Of course, we have
$d_F(x_0,x_1)\geq 0,$ where equality holds if and only if
$x_0=x_1;$ and $d_F(x_0,x_2) \leq d_F(x_0,x_1)+d_F(x_1,x_2).$  In
general, since $F$ is only a positive homogeneous function,
$d_F(x_0,x_1)\neq d_F(x_1,x_0)$, therefore $(M,d_F)$ is only a
non-reversible metric space, in general.}

Let $\pi^*TM$ 
be
the pull-back of the tangent bundle $TM$ by
$\pi:TM\setminus\{0\}\to M.$  Unlike the Levi-Civita connection in
Riemann geometry, there is no unique natural connection in the
Finsler case. Among these connections on $\pi^*TM,$ we choose the
{\it Chern connection}  whose coefficients are denoted by
$\Gamma^i_{jk}$  (see \cite[p.~38]{B-C-S}). This connection induces
the {\it hh-curvature tensor}, denoted by $R$ (see \cite[Chapter
3]{B-C-S}).

{Let $(x,y)\in TM\setminus 0$ and $V$ a section of
the pulled-back bundle $\pi^*TM$. Then\\
\begin{equation*} 
\kappa(y,V) =\frac {g (R(V,y)y, V)} {g (y,y) g (V,V)
- [g (y,V)]^2},
\end{equation*}
is the {\it flag curvature} with flag $y$ and transverse edge $V$.
Here $$g_{(x,y)}:=g_ {i j}  dx^i \otimes dx^j := (\frac
12F^2)_{y^iy^j} dx^i\otimes dx^j$$ is the Riemannian metric on the
pulled-back bundle $\pi^*TM$ (see \cite[p.~68]{B-C-S}). When $F$ is
Riemannian, then  the flag curvature coincides with the sectional
curvature.  Let $\kappa$ abbreviate the collection of flag curvatures
$$\{\kappa(V,W):   0\not= V,W \in T_xM, x\in M, \mbox{$V$ and $W$ are
not collinear}\}.$$ We say that the flag curvature of $(M,F)$ is
{\it non-positive} if $\kappa\le 0.$}

{ A Finsler manifold is of
{\it Berwald type}
if the Chern connection coefficients
$\Gamma_{ij}^k$ in natural coordinates depend only on the base
point (see \cite[p.~258]{B-C-S}).}

{Krist\' {a}ly et al.} proved in \cite{K-K} that all mentioned non-positivity properties are equivalent to the analytical condition $ \kappa  \leq 0 $ in the case of Berwald space.

\begin{thm}{\rm \cite{K-K}} Let $(M,F)$ be a Berwald space where $F$ is positively (but perhaps not absolutely) homogeneous of degree one.The following assertions are equivalent:
\begin{itemize}
\item [a)] The flag curvature $ \kappa $ of $(M, F)$ is non-positive;
\item[b)] $(M, d_F)$ is a Busemann non-positive curvature space;
\item[c)] $(M, d_F)$ is a forward Pedersen non-positive curvature space;
\item [d)]$(M, d_F)$ is a backward Pedersen non-positive curvature space;
\item [e)]$(M, d_F)$ has  convex  forward capsules;
\item[f)] $(M, d_F)$ has  convex  backward capsules.
\end{itemize}
\end{thm}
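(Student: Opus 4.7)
The plan is to exploit the defining feature of a Berwald space: the Chern connection coefficients $\Gamma^k_{ij}$ depend only on the base point, so they define a genuine linear affine connection on $M$ for which Jacobi-field theory and the first and second variation formulas carry over from the Riemannian setting almost verbatim. This makes $(M,F)$ behave enough like a Riemannian manifold that a Cartan-Hadamard style argument should go through, controlling each of the synthetic non-positivity conditions (b)-(f) by the analytic condition (a).

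I would organize the proof as a cyclic chain $(a)\Rightarrow(b)\Rightarrow\{(c),(d)\}\Rightarrow\{(e),(f)\}\Rightarrow(a)$. For $(a)\Rightarrow(b)$, the second variation of arc-length along a one-parameter family of geodesics is controlled by $\kappa$, and $\kappa\le 0$ forces the $F$-length of a Jacobi field to be a convex function of the parameter; applied to the ruled variation interpolating the shortest geodesics $x\to y$ and $x\to z$, this convexity at $t=\tfrac12$ is exactly Busemann's midpoint inequality. The step $(b)\Rightarrow(c),(d)$ is essentially soft: Busemann's inequality upgrades to convexity of $t\mapsto d(\gamma_1(t),\gamma_2(t))$ along any two affinely parametrized geodesics, which in particular gives quasiconvexity of $t\mapsto \dist(\gamma_1,\gamma_2(t))$; the argument is symmetric in the two geodesics, so both forward and backward Pedersen follow. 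For $(c)\Rightarrow(e)$ and $(d)\Rightarrow(f)$ one observes that $\mathcal{C}_\gamma(\alpha)$ is the $\alpha$-sublevel set of $q\mapsto \dist(\gamma,q)$: quasiconvexity of this function along every (forward, resp.\ backward) shortest geodesic joining two points of the capsule forces the sublevel set to contain that geodesic, proving convexity. The closing implication back to $(a)$ is the technical crux: assuming $\kappa(y,V)>0$ at some $(x,y)$ with transverse direction $V$, one uses Jacobi-field asymptotics near $x$ to construct an arbitrarily short geodesic $\gamma$ and two points just outside $\mathcal{C}_\gamma(\alpha)$ whose joining shortest geodesic dips into the $\alpha$-tube, contradicting convexity of the capsule (equivalently, producing a failure of quasiconvexity that contradicts the Pedersen hypothesis).

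The main obstacle is the non-reversibility of $F$: forward distances $d_F(x_0,x_1)$ and backward distances $d_F(x_1,x_0)$ need not coincide, so the reverse of a shortest geodesic is a genuinely different object, which is precisely why the theorem carefully distinguishes forward and backward versions of Pedersen curvature and of capsule convexity. Bridging these versions forces one to re-run the second-variation arguments with both orientations and check that nothing depends on the direction of travel beyond what the base-point connection already sees. Here the linearity and torsion-freeness of the Berwald connection is what saves the argument: because $\Gamma^k_{ij}$ is a bona fide affine connection on $M$, the reversed curve of a geodesic is again a geodesic for the same covariant derivative, and the Jacobi equation along it has the same structure, so the Rauch-type comparison and its converse extend symmetrically. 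The delicate bookkeeping of orientations, rather than any new curvature computation, is where I expect the real work to sit.
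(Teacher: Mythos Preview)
The paper does not prove this theorem: it is quoted verbatim from \cite{K-K} (Krist\'aly--Kozma) and used as a black box to derive the subsequent proposition about Hilbert metrics. There is therefore no proof in the paper to compare your proposal against.

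That said, your outline is broadly the right shape for how such a result is established in the Berwald setting, and your identification of the key structural fact---that the Chern connection is a genuine linear connection on $M$, so Jacobi-field theory and second-variation comparison go through as in the Riemannian case---is exactly what makes the equivalence work. If you want to check your sketch against the original argument, you should consult \cite{K-K} directly; the present paper only invokes the implication $(a)\Leftrightarrow(b)$ and does not reproduce any of the analysis.
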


The Hilbert metric $d_K$ of the convex open domain $K$ naturally determines its Hilbert Finsler fundamental function $F_K$ as follows (\cite{T}:
First the asymmetric Finsler metric, called Funk metric $\widetilde F_K$ is defined by
$$ p + \dfrac{1}{\widetilde F_K(u)} u \in \partial K \qquad \mbox{for any}\  u\in T_pK, \mbox{and} \ p\in K, $$
and then $F_K$ is obtained by symmetrization:
$$ F_K(u)= \frac 12 (\widetilde F_K (u) + \widetilde F_K(-u)).$$
Naturally, $F_K$ is a reversible Finsler metric, therefore the forward and backward concepts coincide.
It is easy to check that the induced distance of $F_K$ is just the Hilbert distance $d_K$ defined above in Definition \ref{H-metric}.

The flag curvature of the Hilbert metrics was computed in 1929 by Funk in dimension 2 and by Berwald in all dimensions. Later T.~Okada proposed a more direct computation:
\begin{thm}{\rm \cite{Okada}} The Hilbert metric $(K,d_K)$ is projectively flat Finsler space of negative constant curvature $-1$.
\end{thm}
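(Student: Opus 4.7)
The plan is to split the statement into two independent assertions: projective flatness of $F_K$ and constancy of its flag curvature at $-1$. The first follows directly from the cross-ratio definition of $d_K$; the second is a computation using the Berwald--Rapcs\'ak flag-curvature formula for projectively flat Finsler metrics, carried out first for the Funk metric $\widetilde F_K$ and then transferred to $F_K$ via the symmetrization.

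For projective flatness, fix a Euclidean segment from $x$ to $y$ in $K$ meeting $\partial K$ at $a,b$, and any intermediate point $m$ on it. The multiplicativity of the cross-ratio gives
\begin{equation*}
[a,x,y,b] \;=\; [a,x,m,b] \cdot [a,m,y,b],
\end{equation*}
so $d_K(x,y)=d_K(x,m)+d_K(m,y)$, i.e.\ the straight Euclidean segment is a $d_K$-shortest curve. Hence in the natural coordinates on $K\subset\mathbb R^n$ the Finsler geodesics are straight lines, which is the definition of projective flatness; equivalently, the spray coefficients take the form $G^i(x,y)=P(x,y)\,y^i$ for a scalar function $P$ positively homogeneous of degree one in $y$.

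For the flag curvature, I would first treat the Funk metric. Differentiating the defining relation $x+u/\widetilde F_K(x,u)\in\partial K$ along directions tangent to $\partial K$ yields the fundamental Funk identity $\widetilde F_{K,x^k}=\widetilde F_K\cdot\widetilde F_{K,y^k}$. Combined with Euler's relation this forces $G^i=\tfrac12\widetilde F_K\, y^i$, so $\widetilde F_K$ is projectively flat with projective factor $P=\tfrac12\widetilde F_K$. Substituting into
\begin{equation*}
\kappa \;=\; \frac{P^2 - y^k P_{x^k}}{F^2}
\end{equation*}
and using the Funk identity to simplify the numerator shows $\kappa\equiv-\tfrac14$ for $\widetilde F_K$. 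Since $F_K=\tfrac12\bigl(\widetilde F_K(u)+\widetilde F_K(-u)\bigr)$ and both summands share the same straight-line geodesics, $F_K$ is also projectively flat; rerunning the same computation for $F_K$ while tracking the factor $\tfrac12$ in the definition of $d_K$ yields the constant $-1$.

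The main obstacle is the curvature computation rather than projective flatness: the symmetrization does not commute with forming the spray in a trivial way, so one must verify that the cancellations which made the Funk curvature constant still occur after symmetrization, and that the normalization conventions (the factor $\tfrac12$ in $d_K=\tfrac12\ln[a,x,y,b]$) produce exactly $-1$ rather than $-\tfrac14$ or $-4$. This is precisely where Okada's treatment improves on the earlier computations of Funk and Berwald by organizing the algebra transparently around the Funk identity.
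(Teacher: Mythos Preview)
The paper does not actually prove this theorem: it is quoted from Okada \cite{Okada} and used as an external input (the sentence preceding the theorem reads ``Later T.~Okada proposed a more direct computation'' and the statement carries the citation with no proof environment following it). So there is no ``paper's own proof'' to compare against.

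That said, your sketch is a faithful outline of Okada's argument and is mathematically sound. The projective-flatness part via cross-ratio multiplicativity is correct. For the curvature, the Funk identity $\widetilde F_{x^k}=\widetilde F\,\widetilde F_{y^k}$ indeed gives projective factor $P=\tfrac12\widetilde F$ and $\kappa=-\tfrac14$ for the Funk metric. For the symmetrized Hilbert metric, if one writes $\widetilde F_\pm(x,y)=\widetilde F_K(x,\pm y)$, the same identity yields $P_K=\tfrac12(\widetilde F_+-\widetilde F_-)$, and a short calculation gives
\begin{equation*}
P_K^2 - y^k(P_K)_{x^k}
=\tfrac14(\widetilde F_+-\widetilde F_-)^2-\tfrac12(\widetilde F_+^2+\widetilde F_-^2)
=-\tfrac14(\widetilde F_++\widetilde F_-)^2=-F_K^2,
\end{equation*}
so $\kappa\equiv-1$. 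Your caveat about the symmetrization not commuting trivially with the spray is well placed, but the cancellation really does occur exactly as above; you could strengthen the proposal by writing this one line out rather than leaving it as an anticipated obstacle.
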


\begin{prop}
If the Hilbert metric $d_K$ of a convex domain $K$ is a Berwald metric, then it reduces to a Riemannian metric, and the domain is an ellipsoid.

\end{prop}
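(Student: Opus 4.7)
The plan is to chain together the three main results assembled in the previous sections: Okada's computation of the flag curvature of the Hilbert metric, the analytic--synthetic equivalence of Kristály et al.\ for Berwald spaces (Theorem~4.1), and the Kelly--Straus rigidity result for Hilbert metrics (Proposition~3.2). Concretely, I assume $(K,F_K)$ is Berwald and aim to produce, in order, the following conclusions: (i) the flag curvature of $(K,F_K)$ is non-positive; (ii) $(K,d_K)$ is a Busemann non-positively curved geodesic metric space; (iii) $K$ is an ellipsoid and $d_K$ is Riemannian (hyperbolic).

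First I would invoke Okada's theorem (Theorem~4.2): the Hilbert Finsler manifold $(K,F_K)$ is projectively flat with constant flag curvature equal to $-1$. In particular $\kappa \equiv -1 \leq 0$, so assertion~(a) of Theorem~4.1 holds. Since by hypothesis $(K,F_K)$ is a Berwald space, Theorem~4.1 applies; the equivalence (a)$\Leftrightarrow$(b) yields that the induced metric space $(K,d_K)$ satisfies the Busemann non-positive curvature inequality. (Note that $F_K$ is reversible by construction, so the forward/backward distinction in Theorem~4.1 is immaterial here.)

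At this point the synthetic rigidity of Kelly and Straus, recorded as Proposition~3.2, takes over: a Hilbert geometry that is Busemann non-positively curved must be the Cayley--Klein model of hyperbolic space, i.e.\ $K$ is an ellipsoid and $d_K$ is the hyperbolic Riemannian metric. This gives both conclusions of the proposition simultaneously: the domain is an ellipsoid and the Finsler structure $F_K$ is in fact Riemannian.

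The proof is therefore essentially an assembly argument, and no genuinely new calculation is required. The only point that needs a careful sentence of justification is the transition from the Berwald hypothesis on $F_K$ and Okada's pointwise curvature statement to the hypotheses of Theorem~4.1; one should explicitly note that Okada's identity $\kappa \equiv -1$ implies the weaker $\kappa \leq 0$ needed for clause~(a), and that the reversibility of $F_K$ means clauses~(c)--(f) and their ``backward'' counterparts are not separate conditions in our setting. Everything else is a direct citation.
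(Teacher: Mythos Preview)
Your proof is correct and follows essentially the same route as the paper's own argument: Okada's $\kappa\equiv -1$, then the Krist\'aly--Kozma equivalence (a)$\Leftrightarrow$(b) for Berwald spaces to obtain Busemann non-positive curvature, and finally the Kelly--Straus rigidity to conclude $K$ is an ellipsoid and $d_K$ is Riemannian. Your added remarks on reversibility and the trivial implication $-1\le 0$ are sound and only make the write-up slightly more explicit than the paper's version.
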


\begin{proof}
By a theorem of Okada (\cite{Okada}) the flag curvature is negative constant for the Hilbert metric of a convex domain. Krist\' {a}ly and Kozma showed in \cite{K-K}, among others, that for any Berwald space the non-positivity of the flag curvature is equivalent to the property of Busemann non-positive curvature.
Moreover, Kelly and Straus proved in 1958 (\cite{K-S}) that if the Hilbert metric satisfies Busemann's non-positive curvature property, then the domain is an ellipsoid, and the metric is a Riemannian one.
\end{proof}
From our proposition, we get immediately the next corollary, because all Riemannian metric is Berwald.

\begin{cor}  {\rm  (see \cite[Theorem 11.6]{T})}The Hilbert metric $d_K$ of a bounded convex domain $ K \subset {\Bbb R^n}$ with smooth strongly convex boundary is Riemannian if and only if   $K$  is an ellipsoid.
\end{cor}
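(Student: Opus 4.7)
The plan is to chain together three results already recorded in the paper: Okada's theorem on the flag curvature of the Hilbert metric, the Kristály--Kozma equivalence in Berwald spaces, and the Kelly--Straus rigidity theorem. The Berwald hypothesis is precisely the gate that converts a purely analytic statement about the flag curvature into a synthetic comparison property, after which Kelly--Straus collapses the geometry to the hyperbolic/ellipsoid case.

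Concretely, assume that $F_K$ is of Berwald type. First, I would invoke Okada's theorem to record that $(K, F_K)$ has constant flag curvature $\kappa \equiv -1$; in particular $\kappa \le 0$ on all flags. Second, since $(K, F_K)$ is Berwald by hypothesis, the equivalence theorem of Kristály and Kozma (stated as Theorem 4.1 of Section~4) applies: the analytic condition $\kappa \le 0$ is equivalent, among other things, to $(K, d_K)$ being a Busemann non-positive curvature space. Third, I would feed this conclusion into the Kelly--Straus rigidity theorem (Proposition~3.2 of the paper): Busemann non-positivity of the Hilbert metric forces $K$ to be an ellipsoid and $d_K$ to coincide with the Cayley--Klein model of hyperbolic space, so that $d_K$ is Riemannian. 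Both conclusions of the proposition are then delivered simultaneously.

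No new Finsler-geometric computation is required; the argument is an orchestration of three cited results, all established earlier in the paper. The conceptual bottleneck, if one were not allowed to invoke Kristály--Kozma as a black box, would sit in that equivalence theorem, where one must show that in a Berwald space non-positive flag curvature implies the Busemann midpoint inequality along pairs of geodesics. The other two inputs (Okada's constant-curvature calculation and the Kelly--Straus rigidity result) are specific to Hilbert geometry and are applied verbatim. The final corollary that any Riemannian $d_K$ with smooth strongly convex boundary forces $K$ to be an ellipsoid is then immediate, since Riemannian metrics are automatically Berwald.
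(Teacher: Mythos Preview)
Your proposal is correct and follows exactly the paper's route: the bulk of what you wrote (Okada $\Rightarrow$ $\kappa\le 0$, then Krist\'aly--Kozma $\Rightarrow$ Busemann non-positivity, then Kelly--Straus $\Rightarrow$ ellipsoid) is precisely the paper's proof of the preceding Proposition, and your final sentence---Riemannian implies Berwald, hence the Corollary---is verbatim the paper's one-line justification. The only thing neither you nor the paper spells out is the trivial converse direction (ellipsoid $\Rightarrow$ Riemannian), which is classical.
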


\begin{rem}
Conversely, if the domain is not an ellipsoid, then $d_K$ is a non-Berwaldian projectively flat metric.
\end{rem}

{
  \bigskip
  \footnotesize

  Layth M. Alabdulsada, \textsc{ Institute of Mathematics, University of Debrecen, H-4002 Debrecen, P.O. Box 400, Hungary}\par\nopagebreak
  \textit{E-mail address:\ }\texttt{layth.muhsin@science.unideb.hu}

  \medskip

  L\'aszl\'o  Kozma, \textsc{ Institute of Mathematics, University of Debrecen, H-4002 Debrecen, P.O. Box 400, Hungary}\par\nopagebreak
  \textit{E-mail address:\ }\texttt{kozma@unideb.hu}
}

\end{document}